\newcommand{\tmop}[1]{\operatorname{#1}}
\newenvironment{enumerateromancap}{\begin{enumerate}[I.]}{\end{enumerate}}
\newtheorem{theorem}{Theorem}
\newtheorem{proposition}[theorem]{Proposition}
\newtheorem{lemma}[theorem]{Lemma}
\newcounter{problemnr}
\newenvironment{enumeratealphacap}{\begin{enumerate}[A.]}{\end{enumerate}}
\newcommand{\tmmathbf}[1]{\boldsymbol{#1}}
\newenvironment{proof}{
  \noindent\textbf{Proof}\ }{\hspace*{\fill}
  \begin{math}\Box\end{math}\medskip}
\newenvironment{proof*}[1]{
  \noindent\textbf{#1\ }}{\hspace*{\fill}
  \begin{math}\Box\end{math}\medskip}
\newcommand{\D}{{\rm I \! D}}
\newcommand{\HH}{{\mathcal H}}
\newcommand{\R}{{\mathbb R}}
\newcommand{\Y}{{\mathbf Y}}
\def\paral{/\kern-0.55ex/}
\def\parals_#1{/\kern-0.55ex/_{\!#1}}
\def\n#1{|\kern-0.24em|\kern-0.24em|#1|\kern-0.24em|\kern-0.24em|}
\begin{document}

\title{Intertwining and the Markov uniqueness\\ problem on path spaces}
\author{K. D. Elworthy$^1$  \& Xue-Mei Li$^2$} 
\date{}

\maketitle

\begin{abstract}
  Techniques of intertwining by It\^o maps are applied to uniqueness questions
  for the Gross-Sobolev derivatives that arise in Malliavin calculus on path
  spaces. In particular claims in our article  \cite{CRAS} are corrected and  put in the context of the Markov uniqueness problem
  and weak differentiability. Full proofs in greater generality will appear in
 \cite{Chainrule}.
  \end{abstract}

\section{Malliavin calculus on $C_0 \mathbb{R}^m$ {and} $C_{x_0}M$. }

\subsection{Notation}

Let $M$ be a compact Riemannian manifold of dimension $n$. Fix $T>0$ and
$x_{0}$ in $M$. Let $C_{x_0} M$ denote the smooth Banach manifold of continuous paths 
\[ \sigma : [ 0, T ] \rightarrow M \tmop{such} \tmop{that} \sigma_0 = x_0 \]
furnished with its Brownian motion measure $\mu_{x_0}$. However most of what
follows works for a class of more general, possible degenerate, diffusion
measures.

Let $C_0\R^m$ be the corresponding space of continuous
$\mathbb{R}^m$- valued paths starting at the origin, with Wiener measure
$\mathbb{P}$, and let $H$ denote its Cameron-Martin space:
$H =L_0^{2, 1}\mathbb{R}^m$ with inner product  $\langle \alpha,
\beta \rangle_H =\int_0^T \langle \alpha' ( s ), \beta' ( s ) \rangle_{\mathbb{R}^m}
\tmop{ds}$.

As a Banach manifold  $C_{x_0} M$ has tangent spaces $T_{\sigma} M$ at each
point $\sigma$, given by
\[ T_{\sigma} M = \{ v : [ 0, T ] \rightarrow \tmop{TM} \left| v ( 0 ) = 0, v
   \tmop{is} \tmop{continuous}, v ( s ) \in T_{\sigma ( s )} M, s\in [0,T]\right. \} . \]
Each tangent space has the uniform norm induced on it by the Riemannian metric
of $M$. As an analogue of $H$ there are the `Bismut tangent spaces'
$\HH_{\sigma}$ defined by
\[ \HH_{\sigma} = \{ v \in T_{\sigma} C_{x_0} M \left | \parals_s^{- 1} v (
   s ) \in L_0^{2,1} T_{x_0} M, 0 \leqslant s \leqslant T\right. \} \]
where $\parals_s$ denotes parallel translation of $T_{x_0} M$ to $T_{\sigma ( s )}
M$ using the Levi-Civita connection.

\subsection{Malliavin Calculus on $C_0\R^m$.}

To have a calculus on $C_0\R^m$ the standard method is to choose a
dense subspace, $\tmop{Dom}(d^H )$, of Fr\'echet differentiable functions (or
elements of the first chaos) in $L^2 ( C_0\R^m;\mathbb{R})$. By
differentiating in the H-directions we obtain the H-derivative operator $d^H :
\tmop{Dom} ( d^H ) \rightarrow L^2 ( C_0\R^m; H^*)$. By the Cameron
-Martin integration by parts formula this operator is closable. Let $d :
\tmop{Dom} ( d ) \rightarrow L^2 ( C_0\R^m; H ^*)$ be its
closure and write $\mathbb{D}^{2, 1}$ for its domain with its graph norm and
inner product.

From work of Shigekawa and Sugita,  \cite{Sugita}, $\mathbb{D}^{2, 1}$ does
not depend on the (sensible) choice of initial domain $\tmop{Dom} ( d^H )$ and
moreover if a function is weakly differentiable with weak derivativative in
$L^2$, in a sense described below, then it is in $\mathbb{D}^{2.1}$. In
particular if $\tmop{Dom} ( d^H )$ consists of the polygonal cylindrical
functions then $\mathbb{D}^{2, 1}$ contains the space $\tmop{BC}^1$ of
bounded functions with bounded continuous Fr\'echet derivatives.

\subsection{Malliavin Calculus on $C_{x_0} M$.}

If $f : C_{x_0} M \to\R$ is Fr\'echet differentiable with
differential $( \tmop{df} )_{\sigma} : T_{\sigma} C_{x_0} M \to R$
at the point $\sigma$, define $( d^H f )_{\sigma} : \HH_{\sigma}
\to\R$ by restriction. Choosing a suitable domain
$\tmop{Dom} ( d^H )$ in $L^2$ the integration by parts results of
{\cite{Driver}} imply closability and we obtain a closed operator $d :
\tmop{Dom} ( d ) \subset L^2 ( C_{x_0} M$; $\mathbb{R}) \longrightarrow L^2
\HH$*, for $L^2 \HH^*$ the space of $L^2$-sections of the dual
`bundle' $\HH^*$ of $\HH$. Let $\mathbb{D}^{2.1}$ or
$\mathbb{D}^{2, 1} ( C_{x_0} M ;\mathbb{R})$ denote the domain of this 
$d$ furnished with its graph norm and inner product. Possible choices for the initial domain
$\tmop{Dom} ( d^H )$ include the following:
\begin{enumerate}
  \item[(i)] $C^{\infty} \tmop{Cyl}$, the space of $C^{\infty}$ cylindrical
  functions;
  
  \item[(ii)] $\tmop{BC}^1$, the space of $BC^1$ bounded functions with first Fr\'echet derivatives bounded;
  
  \item[(iii)] $\tmop{BC}^{\infty}$, the space of infinitely Fr\'echet differentiable
  functions all of whose derivatives are bounded .
\end{enumerate}

One fundamental question is whether such different choices of the initial
domain lead to the same space $\mathbb{D}^{2, 1}$. At the time of writing this
question appears to still be open. There is a gap in the proof suggested in
\cite{CRAS} as will be described in \S2.3 below. However the techniques given there
do show that choices (i) and (iii) above lead to the same $\mathbb{D}^{2,
1}$.

From now on we shall assume that choice (i) has been taken. We use $\nabla :
\tmop{Dom} ( d ) \longrightarrow L^2 \HH$ defined from $d$ using the
canonical isometry of $\HH_{\sigma}$ with its dual space
$\HH_{\sigma}^*$. This requires the choice of a Riemannian structure
on $\HH$; for this see below. Let $\tmop{div} : \tmop{Dom} (
\tmop{div} ) \subset L^2 \HH \longrightarrow L^2 ( C_{x_0}
M; \mathbb{R})$ denote the adjoint of $- \nabla$. Then if $f \in \tmop{Dom}
( d )$ and $v \in \tmop{Dom} ( \tmop{div} )$ we have
\[ \int \tmop{df} ( v ) d \mu_{x_0} = - \int f \tmop{div} ( v ) d \mu_{x_0} = \int \langle\nabla f, v
   \rangle_. d \mu_{x_0} . \]
Using these we get the self-adjoint operator $\Delta$ defined to be
$\tmop{div} \nabla$. Another basic open question is whether this is essentially
self-adjoint. From the point of view of stochastic analysis it would be almost
as good for it to have Markov Uniqueness. Essentially this means that there is
a unique diffusion process on $C_{x_0} M$ whose generator $\mathcal{A}$ agrees
with $\Delta$ on $C^{\infty}$ cylindrical functions, see \cite{ Eberle}.
Another characterisation of this is given below.

Finally there is the question of the existence of `local charts' for $C_{x_0}M$ which preserve,
 at least locally, this sort of differentiability. The
stochastic development maps $\mathfrak D: C_0\R^m \longrightarrow
C_{x_0} M$ appear not to have this property, \cite{Xiao-Dong}. The It\^o maps
we use seem to be the best substitute for such charts.

\section{The approach via It\^o maps and main results.}

\subsection{It\^o maps as a charts}

As in {\cite{AidaEl}} and {\cite{LNM}} take an SDE on $M$

\begin{equation}
 dx_t = X (x_t ) \circ dB_t, \quad  0 \leqslant t \leqslant T
\end{equation}
with our given initial value $x_0$. Here $(B_t, 0 \leqslant t \leqslant
T)$ is the canonical Brownian motion on $\mathbb{R}^m$ and $X ( x )$ is a
linear map from $\mathbb{R}^m$ to the tangent space $T _x M$ for each
$x$ in $M$,  smooth in $x$. Choose the SDE with the properties:
\begin{enumerateromancap}
  \item[SDE1] The solutions to (1) are Brownian motions on $M$.
  
  \item[SDE2] For each $e \in\R^m$ the vector field
  $X (-) e$ has covariant derivative which vanishes at any
  point $x$ where $e$ is orthogonal to the kernel of $X ( x )$.
\end{enumerateromancap}
This can be achieved, for example, by using Nash's theorem to obtain an
isometric immersion of $M$ into some $\mathbb{R}^m$ and taking $X ( x )$ to
be the orthogonal projection onto the the tangent space; see \cite{LNM}.

Let $\mathcal{I} : C_0\R^m \longrightarrow C_{x_0} M$ denote the
It\^o map $\omega \mapsto x_. ( \omega )$ with
$\mathcal{I}_t ( \omega ) = x_t ( \omega )$. Then $\mathcal{I}_{}$ maps
$\mathbb{P}$ to $\mu_{x_0}$. Set
\[ \mathfrak{F}^{x_0} = \sigma \{ x_s : 0 \leqslant s \leqslant T \} \]
\[ \mathbb{D}^{2, 1}_{\mathfrak{F}^{x_0}} = \{ f : C_o\R^m
   \longrightarrow\R\; s.t.\; f \in \mathbb{D}^{2, 1} \tmop{and} f
   \tmop{is} \mathfrak{F}^{x_0} \tmop{-measurable} \} . \]
Also consider the isometric injection $\mathcal{I}^*: L^2 ( C_{x_0} M
;\R) \to  L^2 ( C_0\R^m ;\mathbb{R})$
given by $f \mapsto f \circ \mathcal{I}$.

\subsection{Basic results.}

\begin{theorem}\label{Theorem1}
  \cite{Hodge-1} The map $\mathcal{I}$* sends $\mathbb{D}^{2, 1} ( C_{x_0}
  M ;\R)$ to $\mathbb{D}^{2, 1}_{\mathfrak{F}^{x_0}}$ with closed
  range.
\end{theorem}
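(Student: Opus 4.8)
The plan is to verify the three assertions of the theorem in turn: that $\I^*$ maps $\D^{2,1}(C_{x_0}M;\R)$ into $\D^{2,1}$, that the image consists of $\F^{x_0}$-measurable functions, and that the range is closed. The $\F^{x_0}$-measurability is immediate from the definition of $\I^*$: if $f$ is defined on $C_{x_0}M$ then $f\circ\I$ is by construction a function of the solution path $x_\cdot(\omega)$, hence $\F^{x_0}$-measurable. So the content is in the first and third claims.

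For the mapping property, I would start from $f$ in a convenient initial domain on $C_{x_0}M$, say $f\in C^\infty\mathrm{Cyl}$, and compute the $H$-derivative of $f\circ\I$ via a chain rule for the It\^o map. The key input is that $\I:C_0\R^m\to C_{x_0}M$ is $H$-differentiable in an appropriate sense with derivative $T_\omega\I:H\to\HH_{\I(\omega)}$, and that under conditions SDE1--SDE2 this derivative is (almost surely) well-behaved --- in fact the adapted part of $T\I$ restricted to $\HH$ is known (from the Bismut--Driver--type formulas, as in \cite{AidaEl}, \cite{LNM}) to be essentially an orthogonal-projection-type map onto $\HH_{\I(\omega)}$. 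Thus $d^H(f\circ\I)_\omega = (d^Hf)_{\I(\omega)}\circ T_\omega\I$, and because $T_\omega\I$ has operator norm controlled by $1$ (it is a partial isometry onto the Bismut space, thanks to the choice of SDE), one gets the pointwise bound $|d^H(f\circ\I)_\omega|_{H^*}\le |d^Hf|_{\HH^*_{\I(\omega)}}$. Integrating and using that $\I$ pushes $\p$ forward to $\mu_{x_0}$ yields $\|f\circ\I\|_{\D^{2,1}}\le\|f\|_{\D^{2,1}}$, i.e. $\I^*$ is a (norm-nonincreasing, in fact isometric on functions by the pushforward property) partial-isometric embedding on the initial domain; one then passes to the closure, using that $C^\infty\mathrm{Cyl}$ is dense in $\D^{2,1}(C_{x_0}M;\R)$ and that $\I^*$ is continuous for the graph norms, to conclude $\I^*\D^{2,1}(C_{x_0}M;\R)\subset\D^{2,1}$. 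One must check that the image still lands in $\D^{2,1}_{\F^{x_0}}$ after taking limits, but $L^2$-limits of $\F^{x_0}$-measurable functions are $\F^{x_0}$-measurable, so this is automatic.

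For closedness of the range, the natural strategy is to produce a bounded left inverse of $\I^*$ on $\D^{2,1}$, or at least a bounded operator that is the identity on the range. The candidate is conditional expectation: since $\I^*$ on $L^2$ is an isometry onto the closed subspace $L^2(C_0\R^m;\R)\cap\{\F^{x_0}\text{-measurable}\}$ with inverse given by $g\mapsto g\circ\I^{-1}$-in-law (equivalently, $\I^*$ composed with the obvious identification), the point is to show that taking the $\F^{x_0}$-conditional expectation, or the corresponding projection, is bounded on $\D^{2,1}$. Concretely: if $(g_k)=(\I^*f_k)$ is Cauchy in $\D^{2,1}$ with $g_k\to g$ in $\D^{2,1}$, then $g$ is $\F^{x_0}$-measurable and in $\D^{2,1}$; one needs $g=\I^*f$ for some $f\in\D^{2,1}(C_{x_0}M;\R)$. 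Using the isometry on $L^2$, there is $f\in L^2(C_{x_0}M)$ with $g=f\circ\I$; the remaining task is to see that this $f$ actually lies in $\D^{2,1}(C_{x_0}M;\R)$, which amounts to transporting the derivative bound in the reverse direction --- here one uses the fact that $T\I$, restricted to $\HH$, is a partial isometry onto $\HH_{\I(\omega)}$ with a right inverse of norm $1$, so that the $\HH^*$-norm of $df$ is recovered exactly from the $H^*$-norm of $d(f\circ\I)$ on the relevant subspace. This gives $\|f\|_{\D^{2,1}}=\|\I^*f\|_{\D^{2,1}}$ (or at worst a two-sided bound), whence $f_k\to f$ in $\D^{2,1}(C_{x_0}M;\R)$ and the range is closed.

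The main obstacle, and the place where real care is needed, is the chain rule for $\I$ together with the precise identification of $T_\omega\I|_H$ as a norm-$1$ (partial) isometry onto the Bismut tangent space $\HH_{\I(\omega)}$: this is exactly where conditions SDE1 and SDE2 are used, and it is the technical heart of the intertwining picture. One has to be careful that $\I$ is only $H$-differentiable in a Sobolev/$L^2$ sense rather than everywhere, so the chain rule has to be justified by approximation (first on cylindrical functions, where everything is classical, then by density and closedness), and one must track that the adapted/martingale part of the derivative flow is the relevant piece --- this is the subtlety that, as the introduction notes, was not handled correctly in \cite{CRAS}. Once the isometry statement for $T\I|_\HH$ is in hand, the mapping property and closedness both follow by the soft functional-analytic arguments sketched above.
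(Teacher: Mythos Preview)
The paper does not give a self-contained proof of Theorem~\ref{Theorem1}; it cites \cite{Hodge-1}. Nonetheless, your proposal contains a substantive error that the paper's surrounding machinery makes visible.

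The gap is your repeated claim that the pathwise derivative $T_\omega\mathcal{I}:H\to T_{\mathcal{I}(\omega)}C_{x_0}M$ is a norm-$1$ partial isometry onto the Bismut tangent space $\mathcal{H}_{\mathcal{I}(\omega)}$. This is false. The map $T_\omega\mathcal{I}(h)_t$ is the derivative flow of the SDE applied to $h$; it involves curvature terms, does not in general take values in $\mathcal{H}_{\mathcal{I}(\omega)}$ (the paths $t\mapsto T_\omega\mathcal{I}(h)_t$ are not $H^1$ with respect to damped parallel translation), and has no pathwise norm bound of the sort you assert. What \emph{is} an orthogonal projection onto $\mathcal{H}_\sigma$ is the \emph{conditioned} object
\[
\overline{T\mathcal{I}}_\sigma(h)_s=\mathbb{E}\{T_\omega\mathcal{I}_s(h)\mid x_\cdot(\omega)=\sigma\},
\]
which by the paper's \S3.2 satisfies $\frac{\mathbb{D}}{ds}\overline{T\mathcal{I}}_\sigma(h)_s=X(\sigma(s))\dot h_s$ and has right inverse $\mathbf{Y}_\sigma$. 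Since the chain rule gives $d(f\circ\mathcal{I})_\omega=(df)_{\mathcal{I}(\omega)}\circ T_\omega\mathcal{I}$, not $(df)\circ\overline{T\mathcal{I}}$, your pointwise inequality $|d^H(f\circ\mathcal{I})_\omega|_{H^*}\le |d^Hf|_{\mathcal{H}^*_{\mathcal{I}(\omega)}}$ does not follow, and with it both your boundedness argument and your closed-range argument collapse.

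The route consistent with the paper's framework avoids pathwise bounds and instead passes through conditioning and integration by parts. One shows, via Proposition~\ref{Proposition7} (which is independent of Theorem~\ref{Theorem1}) and the Sugita weak characterisation of $\mathbb{D}^{2,1}$ on Wiener space, that for $f\in\mathbb{D}^{2,1}(C_{x_0}M)$ the function $\mathcal{I}^*f$ has weak $H$-derivative $(df)_{x_\cdot}\circ\overline{T\mathcal{I}}_{x_\cdot}$; the isometry property of $\overline{T\mathcal{I}}_\sigma$ and $\mathbf{Y}_\sigma$ then yields the two-sided norm estimate and hence closedness of the range. Your last paragraph gestures at ``the adapted/martingale part of the derivative flow'' being the relevant piece, which is morally right, but the proof you actually wrote uses the unconditioned $T_\omega\mathcal{I}$ throughout and so does not go through.
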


\begin{theorem}\label{Theorem2}
  Markov uniqueness holds if and only if $\mathcal{I}^*[ \D^{2, 1} ( C_{x_0} M;\R) ] = \D^{2, 1}_{\mathfrak{F}^{x_0}}$.
\end{theorem}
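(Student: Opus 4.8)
The plan is to identify Markov uniqueness for $\Delta$ on $C_{x_0}M$ with a uniqueness statement for a Dirichlet form on $C_0\R^m$ that can be read off from the range of $\mathcal I^*$. The natural framework is the following: on $C_0\R^m$ there is the full Malliavin Dirichlet form $\E(f,g)=\int\langle\nabla f,\nabla g\rangle_H\,d\p$ on $\D^{2,1}$, while on $C_{x_0}M$ one has the form $\E^M(f,g)=\int\langle\nabla f,\nabla g\rangle_{\HH}\,d\mu_{x_0}$ on $\D^{2,1}(C_{x_0}M;\R)$, whose generator is $\Delta=\tmop{div}\nabla$. Since $\mathcal I^*$ is an isometry of $L^2$ carrying $\mu_{x_0}$ to $\p$, and since (by the intertwining built into the construction of $\mathcal I$, using SDE1--SDE2) it intertwines the two gradients in the sense that $\n{\nabla(f\circ\mathcal I)}_H=\n{(\nabla f)\circ\mathcal I}_{\HH}$ pointwise, the image $\mathcal I^*[\D^{2,1}(C_{x_0}M;\R)]$ is a closed subspace of $\D^{2,1}_{\mathfrak F^{x_0}}$ on which $\E$ restricts to a Dirichlet form unitarily equivalent to $\E^M$. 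Markov uniqueness of $\Delta$ is then equivalent to Markov uniqueness of this restricted form, and the content of the theorem is that the latter holds precisely when the restricted form is all of $\D^{2,1}_{\mathfrak F^{x_0}}$ (equipped with the ambient form $\E$, which is itself Markov-unique, being the classical Ornstein--Uhlenbeck/Malliavin form on Wiener space, or at least known to enjoy the required uniqueness on the closed $\mathfrak F^{x_0}$-measurable piece).

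For the forward direction I would argue by contraposition. Suppose $\mathcal I^*[\D^{2,1}(C_{x_0}M;\R)]\subsetneq\D^{2,1}_{\mathfrak F^{x_0}}$, a proper closed subspace by Theorem~\ref{Theorem1}. One then wants to produce two distinct Dirichlet forms extending $\Delta|_{C^\infty\tmop{Cyl}}$. The key point is that $C^\infty$ cylindrical functions on $C_{x_0}M$ pull back under $\mathcal I^*$ into $\D^{2,1}_{\mathfrak F^{x_0}}$, and in fact their pullbacks generate (as a form core) the ambient form $\E$ restricted to the $\mathfrak F^{x_0}$-measurable functions — this is because $\mathcal I$-pullbacks of cylindrical functions on $M$ separate points finely enough, using SDE2 to control covariant derivatives so that the pulled-back gradients span the relevant directions. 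Hence both the closure of $\E^M$ transported by $\mathcal I^*$ and the ambient form $\E|_{\D^{2,1}_{\mathfrak F^{x_0}}}$ are closed extensions of the same pre-Dirichlet form on cylindrical functions; if the two domains differ, these are genuinely distinct Markovian extensions, contradicting Markov uniqueness. Conversely, if equality of the ranges holds, then the transported form $\E^M$ coincides with $\E|_{\D^{2,1}_{\mathfrak F^{x_0}}}$, whose Markov uniqueness (on the closed subspace of $\mathfrak F^{x_0}$-measurable functions inside Wiener space) is known by the standard theory of the Ornstein--Uhlenbeck form; transporting back along the unitary $\mathcal I^*$ gives Markov uniqueness of $\Delta$.

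I expect the main obstacle to be the precise interface between "Markov uniqueness" in the sense of uniqueness of the associated diffusion/generator and the functional-analytic statement about uniqueness of Dirichlet-form extensions — one must be careful that the notion of Markov uniqueness quoted from \cite{Eberle} (uniqueness of a diffusion whose generator agrees with $\Delta$ on $C^\infty$ cylindrical functions) is genuinely captured by the Dirichlet-form picture, and that pulling back along $\mathcal I^*$ respects the Markovian property and the cylindrical core. A second delicate point is verifying that the pullbacks of $C^\infty$ cylindrical functions on $C_{x_0}M$ really do form a form core for $\E$ restricted to $\D^{2,1}_{\mathfrak F^{x_0}}$, rather than merely a dense subspace of $L^2$; this is where SDE2 and the geometry of the It\^o map enter essentially, and it is the heart of the argument. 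The remaining steps — closability, the intertwining of gradients, and the fact that $\mathcal I^*$ has closed range (Theorem~\ref{Theorem1}) — are either quoted or routine.
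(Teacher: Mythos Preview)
Your overall strategy --- transport the problem to Wiener space via $\mathcal I^*$ and invoke known uniqueness there --- is the right instinct, and it is what the paper does. But the execution has a genuine gap, and the missing ingredient is precisely what the paper isolates as Theorem~\ref{Theorem4}.

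First, your forward direction is circular. You assert that pullbacks $\mathcal I^*[C^\infty\tmop{Cyl}]$ are a form core for $\E$ restricted to $\D^{2,1}_{\mathfrak F^{x_0}}$. But since $\mathcal I^*$ is (by your own gradient-intertwining claim) an isometry for the graph norms, the graph-norm closure of $\mathcal I^*[C^\infty\tmop{Cyl}]$ is exactly $\mathcal I^*[\D^{2,1}(C_{x_0}M)]$. So the form-core claim \emph{is} the range equality $\mathcal I^*[\D^{2,1}(C_{x_0}M)]=\D^{2,1}_{\mathfrak F^{x_0}}$; you cannot use it while simultaneously assuming the range is proper. If you drop the form-core claim, you still get two closed forms extending the cylindrical one, but you have not shown that the larger one, when pulled back to $C_{x_0}M$, is a \emph{Markovian} extension with the correct carr\'e-du-champ; that identification is nontrivial.

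Second, in the converse direction you appeal to ``Markov uniqueness of the Ornstein--Uhlenbeck form restricted to $\mathfrak F^{x_0}$-measurable functions''. This is not the classical Sugita/Shigekawa statement: what you need is that $\D^{2,1}_{\mathfrak F^{x_0}}$ is the \emph{maximal} Markovian extension among $\mathfrak F^{x_0}$-measurable functions of the cylindrical form, and establishing that is again the content you are trying to prove.

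The paper bypasses both difficulties by working with the weak Sobolev spaces $W^{2,1}\subseteq{}^0W^{2,1}$ defined via the adjoint of $d^*$. Eberle's characterisation gives Markov uniqueness $\Leftrightarrow \D^{2,1}={}^0W^{2,1}$. Theorem~\ref{Theorem4}B collapses ${}^0W^{2,1}$ to $W^{2,1}$ (using continuity of the divergence, Theorem~\ref{Theorem5}). Theorem~\ref{Theorem4}A, proved via the divergence intertwining of Proposition~\ref{Proposition7}, shows $f\in W^{2,1}(C_{x_0}M)\Leftrightarrow \mathcal I^*(f)\in W^{2,1}(C_0\R^m)$. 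Since $W^{2,1}=\D^{2,1}$ on Wiener space by Sugita, one gets $\mathcal I^*[W^{2,1}(C_{x_0}M)]=\D^{2,1}_{\mathfrak F^{x_0}}$, and injectivity of $\mathcal I^*$ then gives: Markov uniqueness $\Leftrightarrow \D^{2,1}=W^{2,1}$ on $C_{x_0}M$ $\Leftrightarrow \mathcal I^*[\D^{2,1}]=\D^{2,1}_{\mathfrak F^{x_0}}$. The real work, which your sketch does not address, lies in the divergence intertwining (Proposition~\ref{Proposition7}) and the continuity of $\tmop{div}$ on $\D^{2,1}\HH$ (Theorem~\ref{Theorem5}); these replace the form-core step you flagged as ``the heart of the argument''.
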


\begin{theorem}\label{Theorem3}
  If $f : C_0\R^m \to\R$ is in $\tmop{Dom} (
  \Delta )$ and $\mathfrak{F}^{x_0}$-measurable then $f$ belongs to $ \mathcal{I}^*[
\D^{2, 1} ( C_{x_0} M ;\R) ] .$  
\end{theorem}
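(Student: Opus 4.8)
The plan is to reduce the statement to Theorem 2 together with a closability/uniqueness argument for the operator $\Delta$. The key observation is that $\mathcal{I}^*$ conjugates the structures on $C_{x_0}M$ to the corresponding structures on the subspace of $\mathfrak{F}^{x_0}$-measurable functions in $C_0\R^m$: by Theorem \ref{Theorem1}, $\mathcal{I}^*$ sends $\D^{2,1}(C_{x_0}M;\R)$ isometrically onto a closed subspace of $\D^{2,1}_{\mathfrak{F}^{x_0}}$, and under this identification the operator $\nabla$ (hence $\Delta = \tmop{div}\nabla$) on $C_{x_0}M$ corresponds to the restriction of the Malliavin gradient (resp. the Ornstein--Uhlenbeck-type operator) to the $\mathfrak{F}^{x_0}$-measurable functions. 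So I would first record precisely how $d$, $\tmop{div}$ and $\Delta$ transport along $\mathcal{I}^*$, using SDE1 (which guarantees $\mathcal{I}$ pushes $\p$ to $\mu_{x_0}$) and SDE2 (which is exactly what makes the derivative of the It\^o map behave well on Bismut tangent spaces, so that $H$-derivatives of $f\circ\mathcal{I}$ in the fibre directions killed by $X$ vanish).

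Next I would set $L := \mathcal{I}^*[\D^{2,1}(C_{x_0}M;\R)]$, a closed subspace of $\D^{2,1}_{\mathfrak{F}^{x_0}}$ by Theorem \ref{Theorem1}, and let $g := \mathcal{I}^*\mathcal{I}_*$-style projection — more concretely, I would take the closure $A$ of $\Delta$ restricted to $C^\infty\tmop{Cyl}(C_{x_0}M)$ pulled back via $\mathcal{I}^*$, acting on $L^2$ of the $\mathfrak{F}^{x_0}$-measurable functions. Given $f\in\tmop{Dom}(\Delta)$ on $C_0\R^m$ that is $\mathfrak{F}^{x_0}$-measurable, the idea is that $\Delta f$ is then automatically $\mathfrak{F}^{x_0}$-measurable (because $\Delta$ built from $\nabla$ acting in the Cameron--Martin directions respects the $\sigma$-algebra generated by the SDE solution — this uses SDE2 again), so $(f,\Delta f)$ is a pair in the graph of the full operator that "lives on" the $\mathfrak{F}^{x_0}$-world. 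One then wants to conclude $f\in L$. This is where Theorem 2 does not directly apply — Theorem 2 concerns $\D^{2,1}$, not $\tmop{Dom}(\Delta)$ — so the real content is:
\begin{equation}
\tmop{Dom}(\Delta)\cap\{\mathfrak{F}^{x_0}\text{-measurable}\}\ \subseteq\ L,
\end{equation}
which I would prove by showing that the symmetric form $\mathcal{E}(f,f)=\int\langle\nabla f,\nabla f\rangle\,d\p$ restricted to $\mathfrak{F}^{x_0}$-measurable functions is already a \emph{Markov-unique} Dirichlet form — equivalently, that on the pulled-back space the minimal and maximal extensions of $\Delta|_{C^\infty\tmop{Cyl}}$ coincide. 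The point is that on $C_{x_0}M$ we are only asking for an element of $\D^{2,1}$, not for essential self-adjointness, and an element $f\in\tmop{Dom}(\Delta)$ gives a bounded linear functional $v\mapsto\int\langle\nabla f,v\rangle$ on $L^2\HH$-sections; pulling back, $f$ solves $(\lambda - \Delta)f = h$ for $h=(\lambda-\Delta)f\in L^2_{\mathfrak{F}^{x_0}}$, and since $(\lambda-A)$ from the pulled-back cylindrical domain is surjective onto $L^2_{\mathfrak{F}^{x_0}}$ and $(\lambda-\Delta)$ is injective, $f$ must coincide with the solution coming from $L$, hence $f\in L$.

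The main obstacle, and the crux of the argument, is establishing that resolvent surjectivity / range-density statement on the pulled-back side: one needs that $(\lambda-\Delta)$ applied to $\mathcal{I}^*[C^\infty\tmop{Cyl}(C_{x_0}M)]$ has dense range in $L^2_{\mathfrak{F}^{x_0}}$, or equivalently that the form closure of cylindrical functions on $C_{x_0}M$ captures all of $\tmop{Dom}(\Delta)$ after pullback. This is precisely the kind of statement that fails in general for $\D^{2,1}$ (that failure is the Markov uniqueness problem of Theorem 2), but it should hold at the level of $\tmop{Dom}(\Delta)$ because any $f\in\tmop{Dom}(\Delta)$ can be approximated using the Ornstein--Uhlenbeck semigroup $e^{t\Delta}$: the resolvent $(\lambda-\Delta)^{-1}$ maps $L^2$ into $\tmop{Dom}(\Delta)\subseteq\D^{2,1}$, and one checks $e^{t\Delta}$ preserves $\mathfrak{F}^{x_0}$-measurability and maps into $L$ (this last preservation is again a consequence of SDE2 and the intertwining of $\mathcal I$ with the heat flows, as in \cite{Hodge-1}). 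So I would finish by writing $f = \lim_{t\to0} e^{t\Delta}f$ with each $e^{t\Delta}f\in L$ and $L$ closed, giving $f\in L = \mathcal{I}^*[\D^{2,1}(C_{x_0}M;\R)]$ as required.
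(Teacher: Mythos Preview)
There is a genuine gap. Your argument rests on two assertions about the Ornstein--Uhlenbeck operator $\Delta$ on $C_0\R^m$: that $\Delta f$ (hence $e^{t\Delta}f$) remains $\mathfrak{F}^{x_0}$-measurable when $f$ is, and that $e^{t\Delta}$ maps $L^2_{\mathfrak{F}^{x_0}}$ into $L=\mathcal{I}^*[\D^{2,1}(C_{x_0}M;\R)]$. The first is false. By Mehler's formula $(P_tf)(\omega)=\int f\bigl(e^{-t}\omega+\sqrt{1-e^{-2t}}\,\omega'\bigr)\,d\p(\omega')$; when $m>n$ one may perturb $\omega$ along $\ker X(x_\cdot(\omega))$ without changing $x_\cdot(\omega)$, yet this \emph{does} change $x_\cdot\bigl(e^{-t}\omega+\sqrt{1-e^{-2t}}\,\omega'\bigr)$ for generic $\omega'$, so $P_tf$ depends on more than $x_\cdot(\omega)$. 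Equivalently, the Wiener chaos components $I^k(\alpha^k)$ of an $\mathfrak{F}^{x_0}$-measurable $f$ are not individually $\mathfrak{F}^{x_0}$-measurable, so reweighting them by $e^{-tk}$ or by $k$ destroys the measurability. The second assertion, even granting the first, is strictly stronger than the theorem you are proving, and no intertwining of the flat and curved heat semigroups of the kind you invoke is available; citing it is circular. Your appeal to Theorem~\ref{Theorem2} is also misplaced: that result is an \emph{equivalence} with Markov uniqueness, which is open, so it supplies no usable input here.

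The paper's route is quite different and shows where the hypothesis $f\in\tmop{Dom}(\Delta)$ actually enters. One expands $f=\sum_k I^k(\alpha^k)$ in Wiener chaos and sets $J^k(\alpha^k)=\E\{I^k(\alpha^k)\mid\mathfrak{F}^{x_0}\}$, computed by iterating Lemma~\ref{Lemma6}. Each finite partial sum $\sum_{k\le N}J^k(\alpha^k)$ can be exhibited as an element of $\mathcal{I}^*[\D^{2,1}(C_{x_0}M;\R)]$. For an arbitrary $f\in\D^{2,1}$ it is \emph{not} known that the $J^k$-series converges in $\D^{2,1}$ (this is precisely the open Problem~3 of the paper); the additional decay $\sum_k k^2\|I^k(\alpha^k)\|_{L^2}^2<\infty$ furnished by $f\in\tmop{Dom}(\Delta)$ is what controls the $\D^{2,1}$-norms of the $J^k$ (whose $H$-derivatives acquire extra terms from differentiating $K^{\bot}(x_s)$) and forces convergence in $\D^{2,1}$. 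One then concludes by the closed-range statement of Theorem~\ref{Theorem1}.
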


From Theorem \ref{Theorem3} we see that $\tmop{BC}^2 \subset \mathbb{D}^{2, 1}$ on
$C_{x_0} M$. Theorem \ref{Theorem2} is a consequence of Theorem \ref{Theorem4} below.

\problem{Is the set  $\{f : C_0\R^m \to\R$ s.t. $f$
is in {Dom}$( \Delta )$ and $\mathfrak{F}^{x_0}$-measurable \}  dense in
$\D^{2, 1}_{\mathfrak{F}^{x_0}}$ ?}

Problem1 is open. An affirmative answer would imply Markov uniqueness by the
Theorems above.

\subsection{A stronger possibility.}

{\problem{If $f \in \D^{2, 1}$ does $\mathbb{E} \{ f |
\mathfrak{F}^{x_0} \} \in \mathbb{D}^{2, 1}$ ?     
}}

Problem 2 is open: there is a gap in the `proof' in \cite{CRAS}. It is true
for $f$ an exponential martingale or in a finite chaos space. An affirmative
answer would imply an affirmative answer to Problem 1 and Markov uniqueness.

\subsection{Markov uniqueness and weak differentiability}

Let $\D^{2, 1} \HH$ and $D^{2, 1} \HH^*$
be the spaces of $\D^{2, 1}$-H-vector fields and H-1-forms on $C_{x_0}
M$, respectively, with their graph norms (see details below). Write: 

\begin{eqnarray*}
  \hbox{Cyl}^0 \HH^{\ast} &=&\hbox{ linear span }  \{ g dk \vert g, k : C_{x_0} M \longrightarrow\R \hbox{ are in }
  C^{\infty} \hbox{Cyl} \} \\
   W^{2, 1} &=& \tmop{Dom} ( d^{\ast}\; | \;\D^{2, 1} \HH^*)^* \\          
^0 W^{2, 1} &=& \tmop{Dom} ( d^{\ast}\; |\; \tmop{Cyl}^0 \HH^{\ast} )^{\ast} .
\end{eqnarray*}
Then $\D^{2, 1} \subseteq W^{2, 1} \subseteq\;   {}^0 W ^{2, 1} .$
From \cite{Eberle} we have:
\begin{equation}
  \tmop{Markov} \tmop{uniqueness} \Longleftrightarrow \D^{2, 1} = \; ^0
  W^{2, 1}
\end{equation}

We claim:

\begin{theorem}
\label{Theorem4}

 \begin{enumeratealphacap}
  
    \item                 $f \in W^{2, 1}$ on $C_{x_0}M$  $\Longleftrightarrow$
    $\mathcal{I}^{\ast} ( f ) \in W^{2, 1}$ on $C_0\R^m$.
    
    \item                                   $$W^{2, 1}=  {}^0 W^{2, 1}.$$
  \end{enumeratealphacap}
\end{theorem}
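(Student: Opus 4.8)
The plan is to prove part (A) first and then derive part (B) by a transfer-of-structure argument through $\I^*$, combining (A) with Theorem~\ref{Theorem1} and the known flat-space version of the $W^{2,1}={}^0W^{2,1}$ identity. For part (A), the key observation is that $W^{2,1}$ is defined as the dual of the domain of $d^*$ restricted to $\D^{2,1}$ H-one-forms, so the statement $f\in W^{2,1}\Leftrightarrow \I^*f\in W^{2,1}$ is really a statement that $\I^*$ intertwines the relevant divergence operators. First I would make precise the pullback $\I^*$ on H-one-forms and push-forward $\I_*$ on H-vector fields: using property SDE2, the derivative of the It\^o map $T\I_\omega$ carries $H$ isometrically onto the Bismut tangent space $\HH_{\I(\omega)}$ (this is the content of the intertwining set-up of \cite{AidaEl}, \cite{LNM}), so $\I_*$ gives a bijection between (a core of) $\D^{2,1}\HH$ on $C_0\R^m$ adapted to $\mathfrak F^{x_0}$ and $\D^{2,1}\HH$ on $C_{x_0}M$, and dually on one-forms. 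Then I would check the intertwining relation $d^H(\I^*f)=\I^*(d^Hf)$ on cylindrical functions and its adjoint counterpart $\tmop{div}\circ\I_* = \I_*\circ\tmop{div}$ (equivalently $\I^*\circ d^* = d^*\circ\I^*$) on the respective cores; since both sides are closed operators and the relation holds on a core, it extends. From this, $v\mapsto \I_* v$ identifies $\tmop{Dom}(d^*\,|\,\D^{2,1}\HH^*)$ on $C_{x_0}M$ with the corresponding domain of $\mathfrak F^{x_0}$-measurable forms on $C_0\R^m$, and taking duals gives the equivalence in (A) — modulo the point that on the flat side one must restrict to $\mathfrak F^{x_0}$-measurable data, which is handled exactly as in Theorem~\ref{Theorem1}.

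For part (B), the inclusion $W^{2,1}\subseteq{}^0W^{2,1}$ is immediate from $\tmop{Cyl}^0\HH^*\subseteq\D^{2,1}\HH^*$, so the work is the reverse inclusion ${}^0W^{2,1}\subseteq W^{2,1}$. Here I would use that on $C_0\R^m$ the analogous spaces coincide: the flat Wiener space $W^{2,1}$ equals ${}^0W^{2,1}$ equals $\D^{2,1}$, which follows from the density of smooth cylindrical one-forms in $\D^{2,1}H^*$ together with the classical theory of Shigekawa--Sugita (\cite{Sugita}) that $\D^{2,1}$ does not depend on the initial domain — in particular $d^*$ restricted to $\tmop{Cyl}^0 H^*$ has the same closure as $d^*$ on all of $\D^{2,1}H^*$. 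I would then argue that $\I^*$ transports the flat identity to $C_{x_0}M$: given $f\in{}^0W^{2,1}$ on $C_{x_0}M$, the function $\I^*f$ lies in ${}^0W^{2,1}$ on $C_0\R^m$ (by the intertwining, since $\I^*$ of a $\tmop{Cyl}^0$ one-form is controlled by flat $\tmop{Cyl}^0$ one-forms up to the correction terms coming from $T\I$, which are themselves in $\D^{2,1}$ and bounded), hence $\I^*f\in W^{2,1}$ on the flat side, and then part (A) pulls this back to $f\in W^{2,1}$ on $C_{x_0}M$.

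The main obstacle I anticipate is the comparison between $\tmop{Cyl}^0\HH^*$ and its flat counterpart under $\I^*$: pulling back $g\,dk$ for cylindrical $g,k$ on $C_{x_0}M$ produces $(g\circ\I)\,d(k\circ\I)$, which is \emph{not} a flat cylindrical one-form because $k\circ\I$ is not cylindrical — it is only in $\D^{2,1}$ — so one cannot directly conclude membership in flat $\tmop{Cyl}^0 H^*$. The resolution will require showing that the closure of $d^*$ on flat $\tmop{Cyl}^0 H^*$ already contains enough one-forms (e.g. all of $\D^{2,1}H^*$, via Sugita's weak-differentiability characterisation and an approximation of $k\circ\I$ by cylindrical functions whose derivatives converge appropriately), so that $d^*$ applied to $\I^*(\tmop{Cyl}^0\HH^*)$ is controlled by the flat $^0W^{2,1}$-pairing. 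This approximation step — establishing that $\mathfrak F^{x_0}$-measurable elements of $\D^{2,1}H^*$ arising as $\I^*$ of cylindrical one-forms can be reached from flat cylindrical one-forms in the graph norm of $d^*$ — is where the real content lies, and it is essentially the same mechanism (but now at the level of forms rather than functions) that underlies Theorem~\ref{Theorem3}; I would expect to invoke the It\^o-map chart technique of \cite{Chainrule} to carry it out cleanly.
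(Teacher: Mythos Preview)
Your outline for part (A) is in the right spirit---divergence intertwining is indeed the mechanism---but one detail is wrong and the key device is not the one you name. The derivative $T_\omega\mathcal I$ does \emph{not} carry $H$ isometrically onto $\HH_{\mathcal I(\omega)}$; it lands in the full tangent space $T_{x_\cdot(\omega)}C_{x_0}M$, and what maps onto the Bismut tangent space is the \emph{conditional} object $\overline{T\mathcal I}_\sigma(h)_s=\mathbb E\{T\mathcal I_s(h)\mid x_\cdot=\sigma\}$, which is an orthogonal projection, not an isometry. Correspondingly the intertwining is not $\tmop{div}\circ\mathcal I_*=\mathcal I_*\circ\tmop{div}$ on a core but the conditional identity of Proposition~\ref{Proposition7}: $\mathbb E\{\tmop{div}U\mid\mathfrak F^{x_0}\}=(\tmop{div}\,\overline{T\mathcal I(U)})\circ\mathcal I$. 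The paper deduces (A) from this together with the density statement of Lemma~\ref{Lemma8}, not from a bijection of $\D^{2,1}\HH$ spaces.

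For part (B) your transfer argument has a genuine circularity. To show $f\in{}^0W^{2,1}(C_{x_0}M)\Rightarrow\mathcal I^*f\in{}^0W^{2,1}(C_0\R^m)$ you must test $\mathcal I^*f$ against \emph{flat} $\tmop{Cyl}^0$ vector fields $V$; by Proposition~\ref{Proposition7} this reduces to testing $f$ against $\overline{T\mathcal I(V)}$ on the manifold. But $\overline{T\mathcal I(V)}$ is at best in $\D^{2,1}\HH$, not in manifold $\tmop{Cyl}^0\HH$, and your hypothesis on $f$ only controls pairings with the latter. Your proposed fix---approximating $\mathcal I^*(\tmop{Cyl}^0\HH^*)$ by flat $\tmop{Cyl}^0H^*$ in the $d^*$-graph norm---runs in the wrong direction: what you would actually need is to approximate $\overline{T\mathcal I(V)}$ by manifold $\tmop{Cyl}^0$ vector fields in the graph norm of the \emph{manifold} divergence, and that is exactly continuity of $\tmop{div}$ on $\D^{2,1}\HH(C_{x_0}M)$, i.e.\ Theorem~\ref{Theorem5}. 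The paper's route is to prove Theorem~\ref{Theorem5} directly (transferring the flat Kr\'ee--Kr\'ee result via Proposition~\ref{Proposition7} and the identity $\overline{T\mathcal I(\mathcal I^*(\mathbf Y_-(V(-))))}=V$), after which (B) is immediate: $\tmop{Cyl}^0\HH^*$ is $\D^{2,1}$-dense in $\D^{2,1}\HH^*$, and $d^*$ is continuous there, so the two restrictions of $d^*$ have the same closure and hence the same dual domain.
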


If $f\in W^{2,1}$ it has a ``weak derivative'' $df\in L^2\Gamma \HH$ defined by 
$\int df(V)  d\mu_{x_0}=-\int f \text{div} V d\mu_{x_0}$ for all $V\in \D^{2,1}\HH$. See \S\ref{section_3.4} below where the proof of Proposition \ref{Proposition9}
also demonstrates one of the implications of Theorem \ref{Theorem4}A.

An important step in the proof of part B is the analogue of a fundamental
result of \cite{Kree-Kree} for $C_0 \R^m$:

\begin{theorem}\label{Theorem5}
  The divergence operator on $C_{x_0} M$ restricts to give a continuous linear
  map $\tmop{div} :\mathbb{D}^{2, 1} \HH \longrightarrow L^2(C_{x_0}M; \R) .$
\end{theorem}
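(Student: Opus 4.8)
The plan is to transport the statement back to the flat Wiener space $C_0\R^m$, where the corresponding fact is exactly the K\'ree--K\'ree theorem, and then use the intertwining properties of the It\^o map $\mathcal I$ to push the conclusion up to $C_{x_0}M$. Concretely, I would first recall how $\mathcal I$ intertwines the derivative operators: for $f\in\D^{2,1}(C_{x_0}M;\R)$ one has $d(\mathcal I^* f)=\mathcal I^*(df)$ in the appropriate sense (this is built into Theorem \ref{Theorem1}), and there is a companion map on $H$-vector fields. The key geometric input is SDE2: because $X(-)e$ has covariant derivative vanishing wherever $e$ is orthogonal to $\ker X(x)$, the parallel-translation structure defining $\HH_\sigma$ is compatible with the map $T_\omega\mathcal I$, so that $T\mathcal I$ carries $H$ isometrically onto $\HH_{\mathcal I(\omega)}$ and the adjoint carries $\HH^*$-forms to $H^*$-forms. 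Thus there is a natural ``$\mathcal I$-pullback'' $\mathcal I^*:L^2\HH\to L^2 H$ (restricted to the $\mathfrak F^{x_0}$-measurable part) which is an isometry on the relevant fibres.

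Next I would set up the two divergences. Let $\operatorname{div}^M$ and $\operatorname{div}^{\R^m}$ denote the divergence (adjoint of $-\nabla$) on $C_{x_0}M$ and on $C_0\R^m$ respectively. The central identity to establish is the intertwining
\begin{equation}
\mathcal I^*\bigl(\operatorname{div}^M V\bigr)=\operatorname{div}^{\R^m}\bigl(\mathcal I^* V\bigr),\qquad V\in\D^{2,1}\HH,
\end{equation}
valid at least as an $L^2$ identity, or more precisely: for $V\in\D^{2,1}\HH$, the vector field $\mathcal I^* V$ lies in $\D^{2,1}H$ on $C_0\R^m$ with $\operatorname{div}^{\R^m}(\mathcal I^*V)=\mathcal I^*(\operatorname{div}^M V)$. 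This is proved by testing against $\mathcal I^* g$ for $g\in C^\infty\mathrm{Cyl}(C_{x_0}M)$, using the integration-by-parts formula on each side together with $d(\mathcal I^* g)=\mathcal I^*(dg)$ and the fibrewise isometry of $\mathcal I^*$ on $H$-vectors; one must check that $\mathcal I^* g$ ranges over a set large enough to determine $\operatorname{div}^{\R^m}(\mathcal I^*V)$ among $\mathfrak F^{x_0}$-measurable functions, which follows from the density/closed-range content of Theorem \ref{Theorem1}. Once this identity is in hand, continuity of $\operatorname{div}^M:\D^{2,1}\HH\to L^2(C_{x_0}M;\R)$ reduces to continuity of $\operatorname{div}^{\R^m}:\D^{2,1}H\to L^2(C_0\R^m;\R)$, which is precisely the K\'ree--K\'ree theorem of \cite{Kree-Kree}: indeed $\|\operatorname{div}^M V\|_{L^2}=\|\mathcal I^*\operatorname{div}^M V\|_{L^2}=\|\operatorname{div}^{\R^m}\mathcal I^*V\|_{L^2}\lesssim \|\mathcal I^*V\|_{\D^{2,1}H}$, and $\|\mathcal I^*V\|_{\D^{2,1}H}\lesssim\|V\|_{\D^{2,1}\HH}$ by the boundedness of $\mathcal I^*$ on $H$-vector fields (again using the fibrewise isometry plus the derivative intertwining to control $\nabla^{\R^m}(\mathcal I^*V)$ by $\mathcal I^*(\nabla^M V)$ up to curvature-type correction terms that are bounded because $M$ is compact).

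The main obstacle I anticipate is the bookkeeping in the second half of the previous paragraph: establishing that $\mathcal I^*$ maps $\D^{2,1}\HH$ boundedly into $\D^{2,1}H$. The derivative intertwining for \emph{functions} is clean, but differentiating a \emph{vector field} pulled back by $\mathcal I$ brings in the derivative of $T\mathcal I$ itself, i.e.\ second-order behaviour of the It\^o flow, and the parallel-transport correction terms (the ``$W$'' or ``$\Theta$'' processes familiar from \cite{LNM}). Controlling these in $L^2$ is where compactness of $M$ (hence boundedness of curvature and of $\nabla X$) and condition SDE2 do the real work; SDE2 is exactly what kills the boundary terms that would otherwise obstruct the isometry on the Bismut tangent spaces. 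A secondary, more routine obstacle is the domain/approximation issue: one should first prove the intertwining identity for $V$ in a convenient dense subclass (e.g.\ $H$-vector fields of the form $\sum g_i\cdot h_i$ with $g_i\in C^\infty\mathrm{Cyl}$ and $h_i$ deterministic Cameron--Martin paths, transported appropriately) and then pass to the limit using the already-known closedness of $\operatorname{div}^{\R^m}$ and the a priori bound just derived. Finally, I would note that this same argument, read in reverse, gives the boundedness statement on $C_0\R^m$ as a genuine input rather than a black box, so the only external ingredient really needed is \cite{Kree-Kree}.
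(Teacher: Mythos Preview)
Your overall strategy---reduce to the flat Wiener space and invoke K\'ree--K\'ree---is exactly the paper's. But the execution you sketch over-complicates two points that the paper's setup makes trivial, and one of your claimed identities is not quite right.

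First, what you call the ``main obstacle,'' namely showing that the pullback sends $\D^{2,1}\HH$ boundedly into $\D^{2,1}H$, is not an obstacle at all: in \S3.2 the space $\D^{2,1}\HH$ is \emph{defined} by the condition that $\sigma\mapsto\tmmathbf{Y}_\sigma(V(\sigma))$ lie in $\D^{2,1}(C_{x_0}M;H)$. So the transfer to flat space is simply Theorem~\ref{Theorem1} applied to the $H$-valued functional $\tmmathbf{Y}_-(V(-))$, giving $\mathcal I^*(\tmmathbf{Y}_-(V(-)))\in\D^{2,1}(C_0\R^m;H)$ with controlled norm; K\'ree--K\'ree then puts it in $\tmop{Dom}(\tmop{div})$ with a norm bound. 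No second-order It\^o-flow analysis or curvature bookkeeping is needed. Relatedly, $\overline{T\mathcal I}_\sigma:H\to\HH_\sigma$ is an orthogonal \emph{projection}, not an isometry; the isometric embedding goes the other way, via the right inverse $\tmmathbf{Y}_\sigma$.

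Second, the exact intertwining $\mathcal I^*(\tmop{div}^M V)=\tmop{div}^{\R^m}(\mathcal I^*V)$ that you propose is not what is established, and in general $\tmop{div}^{\R^m}$ of an $\mathfrak F^{x_0}$-measurable $H$-vector field need not itself be $\mathfrak F^{x_0}$-measurable. The correct relation is the conditional-expectation version in Proposition~\ref{Proposition7}: for $U\in\tmop{Dom}(\tmop{div})$ on $C_0\R^m$ one has $\E\{\tmop{div}U\mid\mathfrak F^{x_0}\}=(\tmop{div}\,\overline{T\mathcal I(U)})\circ\mathcal I$. Applying this with $U=\mathcal I^*(\tmmathbf{Y}_-(V(-)))$ and using $\overline{T\mathcal I(U)}=V$ yields $V\in\tmop{Dom}(\tmop{div})$ together with formula~(\ref{divergence}); the $L^2$ bound follows since conditional expectation is a contraction. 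This replaces your proposed test-against-$\mathcal I^*g$ argument and sidesteps the density issue you raise.
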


\section{Some details and comments on the proofs.}

We will sketch some parts of the proofs. The full details will appear, in
greater generality, in {\cite{Chainrule}}.

\subsection{To prove Theorem \ref{Theorem3}.}

For $f : C_0\R^m \longrightarrow\R$ in $\mathbb{D}^{2,
1}$ take its chaos expansion
\begin{equation}
  f = \sum_{k=1}^\infty I^k ( \alpha^k ) = \sum_{k = 1}^N I^k ( \alpha^k ) + R_{N + 1}
\end{equation}
say. This converges in $\mathbb{D}^{2, 1}$ as is well known, eg see
\cite{Nualart}.

Set $\mathbb{E} \{ I^k ( \alpha^k ) | \mathfrak{F}^{x_0} \} = J^k ( \alpha^k)$.  Then
\begin{equation}
\mathbb{E} \{ f|\mathfrak{F}^{x_0} \} = \sum_{k=1}^\infty J^k ( \alpha^k )
\end{equation}
The right hand side converges in $L^2$.  An equivalent probem to Problem 2 is:

{\problem{Does the right hand side of equation (4) always converge in
$\D^{2, 1}$?}}

If $f$ is $\mathfrak{F}^{x_0}$-measurable and in the domain of $\Delta$ it is
not difficult to show that there is convergence in $\D^{2, 1}$, using
the Lemma below. Moreover $\sum_{k = 1}^N J^k ( \alpha^k ) \in
\mathcal{I}^{\ast} [\D^{2, 1} ( C_{x_0} M;\R) ] $. Therefore
by Theorem \ref{Theorem1} we see $f \in \mathcal{I}^* [ \D^{2, 1} ( C_{x_0}
M ;\R) ] $. Again this uses the basic result (c.f. \cite{Elworthy-Yor}, \cite{AidaEl},  \cite{LNM}).

\begin{lemma} \label{Lemma6}
Let  $K^{\bot} ( x ) :\mathbb{R}^m \longrightarrow\R^m$
denote the orthogonal projection onto the orthogonal complement of
 the kernel of $X ( x )$ for each $x$ in  $M$. Suppose $(\alpha_s$, $0 \leqslant s \leqslant T)$
 is  progressively measurable, locally square integrable  and $L (\mathbb{R}^m
  ;\mathbb{R}^p )$-valued.  Then 
  \[ \mathbb{E} \left\{\left. \int^T_0 \alpha_s ( \tmop{dB}_s ) \right|\mathfrak{F}^{x_0} \right\}
     = \int_0^T \mathbb{E} \{ \alpha_s |\mathfrak{F}^{x_0} \} K^{\bot}  ( x_s)\tmop{dB}_s . \]
\end{lemma}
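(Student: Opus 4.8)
The plan is to separate the driving Brownian motion $B$ into the part that is ``seen'' by the solution $x_{\cdot}$ and a redundant part which conditioning on $\mathfrak{F}^{x_0}$ averages to zero. Write $\mathfrak{F}_t := \sigma\{B_s : s \leqslant t\}$ and $\mathfrak{F}^{x_0}_t := \sigma\{x_s : s \leqslant t\}$, and set
\[
\beta_t := \int_0^t K^{\bot}(x_s)\, dB_s, \qquad N_t := \int_0^t \bigl( I - K^{\bot}(x_s) \bigr)\, dB_s ,
\]
so that $B_t = \beta_t + N_t$; since $K^{\bot}(x_s)$ is the orthogonal projection onto $\bigl( \ker X(x_s) \bigr)^{\bot}$, both $\beta$ and $N$ are continuous local martingales with $d\langle\beta\rangle_s = K^{\bot}(x_s)\, ds$, $d\langle N\rangle_s = \bigl( I - K^{\bot}(x_s) \bigr)\, ds$ and $\langle\beta , N\rangle \equiv 0$. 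I would next invoke the two structural facts behind the argument, both going back to \cite{Elworthy-Yor}, \cite{AidaEl}, \cite{LNM} and using SDE1 and SDE2: \emph{(i)} $\beta$ is adapted to $(\mathfrak{F}^{x_0}_t)$, and in fact $\mathfrak{F}^{x_0}_t = \mathfrak{F}^{\beta}_t$ modulo null sets --- adaptedness because $X(x_s) = X(x_s) K^{\bot}(x_s)$ gives $\beta = \int_0^{\cdot} Y(x_s)\, dm_s$, where $m$ is the martingale part of the semimartingale $x_{\cdot}$ and $Y(x)$ inverts $X(x)$ on $\bigl( \ker X(x) \bigr)^{\bot}$, and the reverse inclusion because SDE2 lets one rewrite (1) as an equation driven by $\beta$; \emph{(ii)} conditionally on $\mathfrak{F}^{x_0}$ the martingale $N$ has the $\mathfrak{F}^{x_0}$-measurable bracket $\langle N\rangle_t = \int_0^t \bigl( I - K^{\bot}(x_s) \bigr)\, ds$ and is a centred Gaussian martingale, so $N$ is still a martingale for the enlarged filtration $\mathfrak{G}_t := \mathfrak{F}^{x_0} \vee \mathfrak{F}_t$, i.e. $\mathbb{E}\{ N_t - N_s \mid \mathfrak{F}^{x_0} \vee \mathfrak{F}_s \} = 0$ for $s \leqslant t$; this uses the orthogonality $\langle\beta , N\rangle \equiv 0$ together with \emph{(i)}, which makes $\langle N\rangle$ a functional of $\beta$ alone.

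I would then reduce to elementary integrands. After localising by stopping times I may take $\alpha$ bounded, and by linearity and the $L^2$-boundedness of $\alpha \mapsto \mathbb{E}\{ \int_0^T \alpha_s(dB_s) \mid \mathfrak{F}^{x_0} \}$ (conditional expectation is a contraction and the It\^o isometry applies) it is enough to treat one block $\alpha_s = A\, \1_{(a, b]}(s)$ with $A$ bounded and $\mathfrak{F}_a$-measurable. Then $\int_0^T \alpha_s(dB_s) = A(B_b - B_a) = A(\beta_b - \beta_a) + A(N_b - N_a)$. Since $A$ is $\mathfrak{F}_a \subset \mathfrak{G}_a$-measurable and $N$ is a $\mathfrak{G}$-martingale by \emph{(ii)}, $\mathbb{E}\{ A(N_b - N_a) \mid \mathfrak{G}_a \} = 0$, hence $\mathbb{E}\{ A(N_b - N_a) \mid \mathfrak{F}^{x_0} \} = 0$ because $\mathfrak{F}^{x_0} \subset \mathfrak{G}_a$. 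On the other hand $\beta_b - \beta_a$ is $\mathfrak{F}^{x_0}$-measurable by \emph{(i)}, so $\mathbb{E}\{ A(\beta_b - \beta_a) \mid \mathfrak{F}^{x_0} \} = \mathbb{E}\{ A \mid \mathfrak{F}^{x_0} \}(\beta_b - \beta_a) = \mathbb{E}\{ A \mid \mathfrak{F}^{x_0} \} \int_a^b K^{\bot}(x_s)\, dB_s$. Adding the two contributions gives the asserted identity for a single block, and summing over a partition gives it for an arbitrary elementary process $\alpha$.

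Finally, for a general progressively measurable, locally square integrable $\alpha$ I would approximate it in $L^2(ds \otimes \mathbb{P})$ by elementary processes $\alpha^{(n)}$: the left-hand sides converge by the contraction/isometry bound, hence so do the corresponding (finite Riemann-sum) right-hand sides, and one adopts their limit as the meaning of $\int_0^T \mathbb{E}\{ \alpha_s \mid \mathfrak{F}^{x_0} \} K^{\bot}(x_s)\, dB_s$, checking afterwards that it agrees with the anticipating (Skorokhod / forward) integral whenever the latter is defined, and removing the localisation. The step I expect to be the main obstacle is the structural fact \emph{(ii)}: showing that the redundant noise $N$ contributes nothing to the conditional expectation, equivalently that it survives the enlargement by $\mathfrak{F}^{x_0}$ as a martingale. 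This is exactly where SDE2 --- the covariant derivative of $X(\cdot)e$ vanishing at points where $e \perp \ker X$ --- is indispensable: via the resulting orthogonal-projection structure it makes $x_{\cdot}$, hence $\langle N\rangle$, a functional of $\beta$ alone and decouples $N$ from $\beta$, so that conditioning on $\mathfrak{F}^{x_0}$ leaves $N$ a centred Gaussian martingale. A secondary, more technical, point is that for non-adapted $\mathbb{E}\{ \alpha_s \mid \mathfrak{F}^{x_0} \}$ the right-hand side of the identity is a genuinely anticipating stochastic integral, which is why it is cleanest to read it as the $L^2$-limit of its elementary approximations.
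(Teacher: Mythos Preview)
The paper does not give its own proof of Lemma~\ref{Lemma6}; it records the lemma as a ``basic result'' and refers to \cite{Elworthy-Yor}, \cite{AidaEl}, \cite{LNM}. Your decomposition $B=\beta+N$ with $\beta_t=\int_0^t K^\bot(x_s)\,dB_s$, the identification $\mathfrak{F}^{x_0}_t=\mathfrak{F}^\beta_t$, and the survival of $N$ as a martingale under the enlargement $\mathfrak{G}_t=\mathfrak{F}_t\vee\mathfrak{F}^{x_0}$ is precisely the mechanism used in those references, so your outline is on target and there is nothing to compare against in the present paper beyond the citations.

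Two refinements are worth making. First, your concern about anticipation on the right-hand side is unnecessary: once \emph{(i)} and \emph{(ii)} hold, the conditional law of $(N_r)_{r\le s}$ given $\mathfrak{F}^\beta_T$ depends only on $(\beta_r)_{r\le s}$ (its covariance is $\int_0^{\cdot}(I-K^\bot(x_r))\,dr$), so for $\mathfrak{F}_s$-measurable $\alpha_s$ one has $\mathbb{E}\{\alpha_s\mid\mathfrak{F}^{x_0}\}=\mathbb{E}\{\alpha_s\mid\mathfrak{F}^{x_0}_s\}$. Hence the right-hand side is an honest It\^o integral against the $(\mathfrak{F}^{x_0}_t)$-martingale $\beta$, and your $L^2$-limit construction, while correct, is not needed to give it meaning. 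Second, SDE2 is not actually required for the reverse inclusion in \emph{(i)}: writing the equation in It\^o form gives $dx_t=X(x_t)\,dB_t+c(x_t)\,dt=X(x_t)\,d\beta_t+c(x_t)\,dt$, and pathwise uniqueness then makes $x$ adapted to $\mathfrak{F}^\beta$. Condition SDE2 is what lets one keep the Stratonovich form $dx_t=X(x_t)\circ d\beta_t$ without an extra drift, but the filtration equality---and hence Lemma~\ref{Lemma6}---does not depend on it.
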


\subsection{The Riemannian structure for $\HH$.}

Let $\tmop{Ric}^{\sharp} : \tmop{TM} \longrightarrow \tmop{TM}$ correspond to
the Ricci curvature tensor of $M$, and $ W_s : T_{x_0} M \longrightarrow
T_{x_s} M$ the damped, or `Dohrn-Guerra', parallel translation,  defined for
$v_0$ in $T_{x_0} M$ by
\begin{eqnarray*}
\frac{\D W_s ( v_0 )}{\tmop{ds}}  &=& 0 \\
W_0 ( v_0 ) &=& v_0 . \end{eqnarray*}
Here $\frac{\D}{\tmop{ds}} =  \frac{D}{\tmop{ds}} +
   \frac{1}{2} \tmop{Ric}^{\sharp} $.
Define $\langle v^1, v^2\rangle_{\sigma} = \int_0^T \langle \frac{\mathbb{D}}{\tmop{ds}} v^1,
\frac{\mathbb{D}}{\tmop{ds}} v^2 \rangle_{\sigma_s}ds$ and let $\pmb \nabla$ denote the damped Markovian connection
of {\cite{Cruzeiro-Fang}},  see {\cite{Chainrule}} for details.

For each $0 \leqslant t \leqslant T$ the It\^o map
$\mathcal{I}_t : H \longrightarrow T_{x_t} M$ is infinitely differentiable in the sense of Malliavin Calculus,  with derivative $T_{\omega} \mathcal{I}_t:
H \longrightarrow T_{x_t ( \omega )} M$ giving rise to a
continuous linear map $T_{\omega} \mathcal{I} : H \longrightarrow T_{x_{t (
\omega )}}$M defined almost surely for $\omega \in C_0\R^m$. For
$\sigma \in C_{x_0} M$ define $\overline{T\mathcal{I}_{}}_{\sigma} : H\longrightarrow \HH_{\sigma}$ by
\[ \overline{T\mathcal{I}}_{\sigma} ( h )_s =\mathbb{E} \{ T\mathcal{I}_s ( h
   ) | x_. = \sigma \} . \]
From \cite{LNM} this does map into the Bismut tangent space and gives an
orthogonal projection onto it. It is given by
\[ \frac{\D}{\tmop{ds}} \overline{T\mathcal{I}}_{\sigma} ( h )_s =
   X ( \sigma ( s ) ) ( \dot{h}_s ) \]
and has right inverse $\tmmathbf{Y}_{\sigma} : \HH_{\sigma}
\longrightarrow H$ given by
$$ \tmmathbf{Y}_{\sigma} ( v )_t = \int_0^t Y_{\sigma ( s )} (
   \frac{\mathbb{D}}{\tmop{ds}} v_s ) \tmop{ds},$$
   for  $ {Y}_x : T_x M  \longrightarrow\R^m$ the right inverse of
 $X ( x )$ defined by $Y_x = X ( x )^{\ast} $.
   
It turns out,  \cite{Chainrule},  that for suitable H-vector fields $V$ on
$C_{x_0} M$, the covariant derivative is given by
$\pmb \nabla_{u} V = \overline{T\mathcal{I}}_{\sigma} ( d
(\tmmathbf{Y}_- ( V ( - ) ) )_{\sigma} ( u ) )$, for $u \in T_{\sigma} C_{x_0}
M$, and we define $V$ to be in $\D^{2, 1} \HH$ iff $\sigma
\mapsto \tmmathbf{Y}_{\sigma} ( V ( \sigma ) )$ is in 
$\D^{2, 1} ( C_{x_0} M ; H )$.

\subsection{Continuity of the divergence}

There is also a continuous linear map $ \overline{T\mathcal{I}(- )}: L^2 ( C_0
\mathbb{R}^m ; H ) \longrightarrow L^2 \HH$ defined by
$\overline{T\mathcal{I}( U )} ( \sigma )_s = \mathbb{E} \{ T_- \mathcal{I}_s
( U ( - ) ) | x_\cdot ( - ) = \sigma \}$, \cite{Hodge-1}. Another fundamental and
easily proved result is

\begin{proposition}\label{Proposition7}
  Suppose the H-vector field U on $C_0\R^m$ is in Dom(div). Then
  $\overline{T\mathcal{I}( U )}$ is in Dom(div) on $C_{x_0} M$ and
  \begin{equation}
    \mathbb{E} \{ \tmop{divU} | \mathfrak{F}^{x_0} \} = ( \tmop{div}
    \overline{T\mathcal{I}( U )} ) \circ \mathcal{I}
  \end{equation}
\end{proposition}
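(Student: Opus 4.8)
The plan is to verify the identity (6) by testing both sides against a dense family of functions on $C_{x_0}M$, namely $C^\infty$ cylindrical functions $g$, and using the defining adjoint relations for $\tmop{div}$ on the two path spaces together with the intertwining property of $\overline{T\mathcal{I}}$. First I would recall that for $g\in C^\infty\tmop{Cyl}$ on $C_{x_0}M$ one has $g\circ\mathcal I\in\mathbb D^{2,1}$ on $C_0\R^m$ (indeed $\mathcal I^*$ is the isometric injection of the earlier sections), and the chain rule for the It\^o map gives $\nabla^H(g\circ\mathcal I)(\omega)=(T_\omega\mathcal I)^*\big((\nabla g)_{x_\cdot(\omega)}\big)$, equivalently $d(g\circ\mathcal I)_\omega(h)=dg_{x_\cdot(\omega)}\big(T_\omega\mathcal I(h)\big)$ for $h\in H$. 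Dualizing, $\langle \overline{T\mathcal I(U)}(\sigma),\,\nabla g(\sigma)\rangle_\sigma=\mathbb E\{\langle U,\nabla^H(g\circ\mathcal I)\rangle_H\mid x_\cdot=\sigma\}$, which is exactly the statement that $\overline{T\mathcal I(-)}$ is the $L^2$-adjoint-friendly fibrewise conditional expectation of $T\mathcal I(-)$.

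The core computation then runs: for $g\in C^\infty\tmop{Cyl}$ on $C_{x_0}M$,
\begin{eqnarray*}
\int_{C_{x_0}M} g\,\big(\tmop{div}\overline{T\mathcal I(U)}\big)\,d\mu_{x_0}
&=& -\int_{C_{x_0}M} \langle \nabla g,\ \overline{T\mathcal I(U)}\rangle\,d\mu_{x_0}\\
&=& -\int_{C_0\R^m} \langle \nabla^H(g\circ\mathcal I),\ U\rangle_H\,d\mathbb P\\
&=& \int_{C_0\R^m} (g\circ\mathcal I)\,\tmop{div}U\,d\mathbb P\\
&=& \int_{C_0\R^m} (g\circ\mathcal I)\,\mathbb E\{\tmop{div}U\mid\mathfrak F^{x_0}\}\,d\mathbb P\\
&=& \int_{C_{x_0}M} g\cdot\big(\mathbb E\{\tmop{div}U\mid\mathfrak F^{x_0}\}\ \text{as a function of }\sigma\big)\,d\mu_{x_0}.
\end{eqnarray*}
Here the first line is the defining property of $\tmop{div}$ on $C_{x_0}M$, the second uses the chain rule / intertwining identity together with $\mathcal I_*\mathbb P=\mu_{x_0}$, the third uses $U\in\tmop{Dom}(\tmop{div})$ on $C_0\R^m$, the fourth inserts a conditional expectation legitimately because $g\circ\mathcal I$ is $\mathfrak F^{x_0}$-measurable, and the last uses again that $\mathcal I_*\mathbb P=\mu_{x_0}$ and that an $\mathfrak F^{x_0}$-measurable function is of the form $(\text{function on }C_{x_0}M)\circ\mathcal I$. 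Since this holds for all $g$ in a set dense in $\mathbb D^{2,1}(C_{x_0}M;\R)$, and $g\mapsto\int g\,(\cdot)\,d\mu_{x_0}$ separates $L^2(C_{x_0}M;\R)$, we conclude both that $\overline{T\mathcal I(U)}\in\tmop{Dom}(\tmop{div})$ (because the linear functional $g\mapsto -\int\langle\nabla g,\overline{T\mathcal I(U)}\rangle$ is represented by the $L^2$ function $\mathbb E\{\tmop{div}U\mid\mathfrak F^{x_0}\}\circ\mathcal I^{-1}$) and that the identity (6) holds.

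The steps I expect to be routine are the measure-theoretic manipulations (pushing $\mathbb P$ forward to $\mu_{x_0}$, inserting the conditional expectation) and the density of $C^\infty\tmop{Cyl}$. The step that needs genuine care — and is the crux of the argument — is the chain rule/intertwining identity $\langle\nabla^H(g\circ\mathcal I),U\rangle_H$ versus $\langle\nabla g\circ\mathcal I,\ T\mathcal I(U)\rangle$ and, after taking conditional expectations, the identification with $\langle\nabla g,\overline{T\mathcal I(U)}\rangle$ along $x_\cdot$; this rests on the differentiability of $\mathcal I$ in the Malliavin sense and on the fact (quoted from \cite{LNM}) that $\overline{T\mathcal I}_\sigma$ lands in $\HH_\sigma$ and is the orthogonal projection, so that the fibrewise conditional expectation commutes with the pairing. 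One must also check the integrability needed to justify interchanging $\mathbb E\{\cdot\mid\mathfrak F^{x_0}\}$ with the $H$-inner product, but for $U\in\tmop{Dom}(\tmop{div})$ and $g$ cylindrical this is immediate. Once the intertwining identity is in hand the proposition follows purely formally, which is why it is described as ``easily proved''.
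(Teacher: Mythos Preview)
Your argument is correct and is precisely the natural duality computation one expects here. The paper itself does not actually give a proof of Proposition~\ref{Proposition7}; it only introduces it as ``another fundamental and easily proved result'', so there is nothing to compare against beyond confirming that your approach matches the spirit of that remark---which it does.

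One small presentational point: your displayed chain of equalities begins with $\int g\,(\tmop{div}\,\overline{T\mathcal I(U)})\,d\mu_{x_0}$, which presupposes $\overline{T\mathcal I(U)}\in\tmop{Dom}(\tmop{div})$ before that has been established. You do address this at the end, but it would be cleaner to start from $-\int\langle\nabla g,\overline{T\mathcal I(U)}\rangle\,d\mu_{x_0}$, run the chain forward to obtain an $L^2$ representative for this functional of $g$, and then invoke the definition of $\tmop{Dom}(\tmop{div})$ to conclude both membership and the formula simultaneously. The content is unchanged.
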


Theorem \ref{Theorem5} follows easily from Proposition \ref{Proposition7} by observing that if $V \in
\mathbb{D}^{2, 1} \HH$ then, from Theorem \ref{Theorem1}, $\mathcal{I}^{\ast}(\tmmathbf{Y}_- V ( - ))
\in \mathbb{D}^{2, 1}$. By \cite{Kree-Kree} this implies that
$\mathcal{I}^{\ast}(\Y_- ( V ( - ) )$ is in Dom(div). Since
\[ \overline{T\mathcal{I}(\mathcal{I}^{\ast}(\Y_-} \overline{( V ( - ) )}) = V \]
Proposition~\ref{Proposition7} assures us that $V \in \tmop{Dom} ( \tmop{div} )$.  
Moreover
\begin{equation}
\label{divergence}
  \text{$\tmop{divV} ( x_\cdot) =\mathbb{E} \{ \tmop{div}\mathcal{I}^{\ast}( \tmmathbf{Y}_- ( V ( -
  ) )) | \mathfrak{F}^{x_0} \}$} .
\end{equation}

Theorem \ref{Theorem4}A can be deduced from Proposition \ref{Proposition7} together with:

\begin{lemma}\label{Lemma8}
  The set of H-vector fields $V$ on $C_0
 \R^m$ such that $\overline{T\mathcal{I}( V )} \in
  \mathbb{D}^{2, 1} \HH$ is dense in $\mathbb{D}^{2, 1}$.
\end{lemma}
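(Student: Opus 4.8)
The plan is to produce, for a given $U \in L^2(C_0\R^m; H)$ (equivalently an $H$-vector field on $C_0\R^m$), an approximating sequence $V_j$ whose images $\overline{T\mathcal I(V_j)}$ land in $\mathbb D^{2,1}\HH$, and then to upgrade this to density in the stronger $\mathbb D^{2,1}$-topology of $\mathbb D^{2,1}\HH$. The natural candidates are obtained by pulling back $\mathbb D^{2,1}$-$H$-vector fields from $C_{x_0}M$: for $W \in \mathbb D^{2,1}\HH$ on $C_{x_0}M$, set $V_W = \mathcal I^\ast(\tmmathbf Y_- (W(-)))$, regarded as an $H$-vector field on $C_0\R^m$. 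By the definition of $\mathbb D^{2,1}\HH$ given in \S3.2 together with Theorem \ref{Theorem1}, $V_W \in \mathbb D^{2,1}(C_0\R^m; H)$, and by the identity $\overline{T\mathcal I(\mathcal I^\ast(\tmmathbf Y_- (W(-))))} = W$ recorded just before Lemma \ref{Lemma8}, we have $\overline{T\mathcal I(V_W)} = W \in \mathbb D^{2,1}\HH$. So every such $V_W$ lies in the set whose density we must prove.

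First I would show that the $V_W$ are dense in the \emph{plain} $L^2(C_0\R^m;H)$ (or equivalently that their closure contains enough to approximate an arbitrary $U$ in the graph norm after a further argument): the range of $\overline{T\mathcal I(-)}: L^2(C_0\R^m;H) \to L^2\HH$ consists, fibrewise, of the projection onto the Bismut subspace, and $\tmmathbf Y$ then identifies $L^2\HH$ with the $\mathfrak F^{x_0}$-measurable part of $L^2(C_0\R^m;H)$ composed with the appropriate projection $K^\perp$; an arbitrary $U$ differs from its $\overline{T\mathcal I}$-image only by the kernel directions and by the conditional expectation, both of which can be made small or absorbed. Concretely, I expect the right statement to be that $\{V_W : W \in \mathbb D^{2,1}\HH\}$ is dense in $\mathbb D^{2,1}(C_0\R^m;H)$ restricted to the subspace of fields adapted in the relevant sense, and that this suffices because $\overline{T\mathcal I(U)}$ depends only on the conditional expectation $\mathbb E\{T\mathcal I(U)\mid x_\cdot\}$, which is unchanged if $U$ is replaced by $\overline{T\mathcal I}$-compatible data.

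The key steps, in order: (1) identify the closed subspace of $L^2(C_0\R^m;H)$ on which $\overline{T\mathcal I(-)}$ acts faithfully, using SDE1, SDE2 and Lemma \ref{Lemma6}; (2) show that on this subspace $\overline{T\mathcal I(-)}$ is an isomorphism onto $L^2\HH$ and restricts to an isomorphism of the corresponding $\mathbb D^{2,1}$-closures — here one invokes Theorem \ref{Theorem1} and the chain-rule description of $\pmb\nabla$ from \S3.2; (3) deduce that $\{V_W\}$ is dense in that $\mathbb D^{2,1}$-subspace, since $\mathbb D^{2,1}\HH$ on $C_{x_0}M$ is by construction the $\mathbb D^{2,1}$-closure of nice vector fields; (4) finally, given an arbitrary $\mathbb D^{2,1}$-$H$-vector field $V$ on $C_0\R^m$, show its distance in $\mathbb D^{2,1}$ to this subspace does not obstruct the conclusion, because what Lemma \ref{Lemma8} asserts is density in $\mathbb D^{2,1}$ of the \emph{preimage} set, and any $V$ can be corrected by a field in the kernel directions of $X$ without leaving a small $\mathbb D^{2,1}$-ball and without changing $\overline{T\mathcal I(V)}$. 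The main obstacle I anticipate is step (2)–(3): controlling the $\mathbb D^{2,1}$-norm of $V_W$ in terms of that of $W$ requires that $\tmmathbf Y_\sigma$ and $\overline{T\mathcal I}_\sigma$, together with their Malliavin derivatives, be uniformly controlled — i.e. that the derivative flow $T\mathcal I$ and its damped/conditioned versions have the needed $L^2$-integrability and differentiability, which is exactly the technical heart deferred to \cite{Chainrule}; granting those estimates, the density passes through by a standard closed-graph / closure argument.
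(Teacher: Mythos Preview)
The paper does not actually prove Lemma~\ref{Lemma8}: it is stated as an ingredient in deducing Theorem~\ref{Theorem4}A from Proposition~\ref{Proposition7}, with the full argument deferred to \cite{Chainrule}. So there is no proof in the paper against which to compare your proposal directly.

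That said, your proposed route has a genuine gap. The fields $V_W=\mathcal I^\ast(\tmmathbf Y_-(W(-)))$ you build are all $\mathfrak F^{x_0}$-measurable, so their $\mathbb D^{2,1}$-closure sits inside the proper closed subspace $\mathbb D^{2,1}_{\mathfrak F^{x_0}}(C_0\R^m;H)$. Your step~(4), correcting an arbitrary $V$ by ``kernel-direction'' fields, does not bridge this: the obstruction to approximating a general $V\in\mathbb D^{2,1}(C_0\R^m;H)$ is its non-$\mathfrak F^{x_0}$-measurable component, and adding a field whose derivative lies in $\ker X(x_s)$ neither removes that component nor moves $V$ into a small $\mathbb D^{2,1}$-ball around an $\mathfrak F^{x_0}$-measurable field. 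You would need to know that $\{V_W\}+\bigl(\ker\overline{T\mathcal I(\,\cdot\,)}\cap\mathbb D^{2,1}\bigr)$ is dense in $\mathbb D^{2,1}(C_0\R^m;H)$, and nothing in steps (1)--(3) delivers this; it is essentially a restatement of the lemma.

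A more direct line, consonant with the tools the paper sets up, is to start from a standard dense class in $\mathbb D^{2,1}(C_0\R^m;H)$ --- say finite sums $V=\sum_j g_j\,h_j$ with $g_j$ smooth cylindrical on $C_0\R^m$ and $h_j\in H$ --- and verify for each such $V$ that $\overline{T\mathcal I(V)}\in\mathbb D^{2,1}\HH$. This uses only that $T\mathcal I$ is smooth in the Malliavin sense (stated in \S3.2), the explicit description of $\overline{T\mathcal I}_\sigma$ via $X(\sigma(s))$, and the definition of $\mathbb D^{2,1}\HH$ through $\tmmathbf Y$. No decomposition of a general $V$ is required, and the $\mathfrak F^{x_0}$-measurability issue never arises.
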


\subsection{Intertwining and weak differentiability.}
\label{section_3.4}

To see how weak differentiability relates to intertwining by our It\^o maps we
have:

\begin{proposition}\label{Proposition9}
  If $f \in W^{2, 1} \tmop{it} \tmop{has} \tmop{weak} \tmop{derivative}
  \tmop{df} \tmop{given} \tmop{by}$
  \begin{equation}
    ( \tmop{df} )_{\sigma} =\mathbb{E} \{ d (\mathcal{I}^{\ast} ( f )
    )_{\omega} | x_. ( \omega ) = \sigma \} \tmmathbf{Y}_{\sigma}
  \end{equation}
\end{proposition}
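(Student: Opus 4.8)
The plan is to verify the defining property of the weak derivative, namely that the right-hand side of (9) satisfies $\int_{C_{x_0}M} (\mathrm{df})_\sigma(V(\sigma))\, d\mu_{x_0} = -\int_{C_{x_0}M} f\,\mathrm{div} V\, d\mu_{x_0}$ for every $V\in \mathbb{D}^{2,1}\HH$, and is itself in $L^2\HH^*$; since $W^{2,1}$ is defined via the adjoint of $d^*$ restricted to $\mathbb{D}^{2,1}\HH^*$, this characterizing identity determines $\mathrm{df}$ uniquely, so it suffices to check that the stated formula works. First I would pull everything back to $C_0\R^m$ via $\mathcal{I}$: since $\mathcal{I}$ pushes $\p$ forward to $\mu_{x_0}$, the left-hand side equals $\int_{C_0\R^m} (\mathrm{df})_{x_\cdot(\omega)}\big(V(x_\cdot(\omega))\big)\, d\p(\omega)$, and inserting (9) gives $\int \mathbb{E}\{d(\mathcal{I}^*f)_\omega \mid x_\cdot=\sigma\}\,\Y_\sigma\big(V(\sigma)\big)\, d\mu_{x_0}(\sigma)$. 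Recognizing $\Y_\sigma(V(\sigma))$ as (essentially) $\mathcal{I}^*(\Y_-(V(-)))$ and using that $V(\sigma) = \overline{T\mathcal{I}}_\sigma\big(\Y_\sigma V(\sigma)\big)$ together with the tower property for $\mathbb{E}\{\cdot\mid\mathfrak{F}^{x_0}\}$, I would rewrite this as $\int_{C_0\R^m} d(\mathcal{I}^*f)\big(U\big)\, d\p$ where $U := \mathcal{I}^*(\Y_-(V(-)))$ is the H-vector field on $C_0\R^m$ whose image under $\overline{T\mathcal{I}(-)}$ is $V$.

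Next I would invoke the hypothesis $f\in W^{2,1}$ on $C_{x_0}M$: by Theorem \ref{Theorem4}A this is equivalent to $\mathcal{I}^*(f)\in W^{2,1}$ on $C_0\R^m$, so $\mathcal{I}^*f$ has a weak derivative against $\mathbb{D}^{2,1}$-vector fields on $C_0\R^m$ — but for the pairing to make sense I need $U\in\mathrm{Dom}(\mathrm{div})$ on $C_0\R^m$. This is exactly the content already extracted in \S3.3: if $V\in\mathbb{D}^{2,1}\HH$ then $\mathcal{I}^*(\Y_-(V(-)))\in\mathbb{D}^{2,1}$ by Theorem \ref{Theorem1}, hence in $\mathrm{Dom}(\mathrm{div})$ by \cite{Kree-Kree}, so $U\in\mathrm{Dom}(\mathrm{div})$. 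Then by the very definition of weak derivative on $C_0\R^m$, $\int d(\mathcal{I}^*f)(U)\,d\p = -\int \mathcal{I}^*f\cdot\mathrm{div} U\, d\p$. Finally, applying Proposition \ref{Proposition7} (or directly the divergence identity \eqref{divergence}) we have $\mathbb{E}\{\mathrm{div} U\mid\mathfrak{F}^{x_0}\} = (\mathrm{div}\,\overline{T\mathcal{I}(U)})\circ\mathcal{I} = (\mathrm{div} V)\circ\mathcal{I}$, so $-\int \mathcal{I}^*f\cdot\mathrm{div} U\,d\p = -\int f\cdot\mathrm{div} V\, d\mu_{x_0}$, which closes the loop. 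Along the way I should also check the $L^2$-bound on $(\mathrm{df})_\sigma$: this follows since $\|(\mathrm{df})_\sigma\|_{\HH^*_\sigma}\le \mathbb{E}\{\|d(\mathcal{I}^*f)_\omega\|_{H^*}\mid x_\cdot=\sigma\}$ because $\overline{T\mathcal{I}}_\sigma$ (the adjoint of $\Y_\sigma$ here) is a projection of norm one.

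The main obstacle I anticipate is the bookkeeping around conditional expectations and the identification $\overline{T\mathcal{I}(\mathcal{I}^*(\Y_-(V(-))))} = V$: one must be careful that $\Y_\sigma(V(\sigma))$ really is a version of $\mathcal{I}^*(\Y_-(V(-)))$ (i.e.\ that $\sigma\mapsto\Y_\sigma V(\sigma)$ composed with $\mathcal{I}$ agrees $\p$-a.s.\ with the $H$-valued random variable $\omega\mapsto\Y_{x_\cdot(\omega)}V(x_\cdot(\omega))$), and that when we replace $V(\sigma)$ by $\overline{T\mathcal{I}}_\sigma(\Y_\sigma V(\sigma))$ inside the pairing with $\mathbb{E}\{d(\mathcal{I}^*f)\mid x_\cdot=\sigma\}$, the adjoint relation between $\overline{T\mathcal{I}}_\sigma$ and $\Y_\sigma$ is being used correctly — in particular whether $\overline{T\mathcal{I}}_\sigma$ is the adjoint of $\Y_\sigma$ or only a right inverse, and how that interacts with the conditioning. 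A secondary point requiring care is that the weak-derivative pairing only needs to be checked on the generating class $\mathrm{Cyl}^0\HH^*$ or on all of $\mathbb{D}^{2,1}\HH$; using Theorem \ref{Theorem4}B ($W^{2,1} = {}^0W^{2,1}$) one can reduce to whichever class is convenient, but one should state explicitly which density is being invoked so the argument is not circular with the proof of Theorem \ref{Theorem4}.
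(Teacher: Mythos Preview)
Your proposal is correct and follows essentially the same route as the paper: both arguments pull back to $C_0\R^m$, use that $U=\mathcal{I}^*(\Y_-(V(-)))\in\tmop{Dom}(\tmop{div})$ via Theorem~\ref{Theorem1} and \cite{Kree-Kree}, invoke Theorem~\ref{Theorem4}A to know $\mathcal{I}^*(f)$ has a derivative on Wiener space, and use the divergence identity~(\ref{divergence}) to match $\tmop{div}U$ with $(\tmop{div}V)\circ\mathcal{I}$. The only cosmetic difference is direction: the paper starts from $\int f\,\tmop{div}V\,d\mu$ and unwinds to the formula, while you start from the formula and unwind to $-\int f\,\tmop{div}V\,d\mu$; your extra remarks on the $L^2$-bound and on avoiding circularity with Theorem~\ref{Theorem4}B are sensible but not needed, since the paper tests only against $V\in\D^{2,1}\HH$ and uses only part A.
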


\begin{proof}
  Let $V \in \mathbb{D}^{2, 1} \HH$. Then for $f \in W^{2, 1}$, by
  equation (\ref{divergence}) and then by Theorem \ref{Theorem4}A,
  \begin{eqnarray*}
    \int_{C_{x_0} M} f \tmop{div} ( V ) d \mu &=& \int_{C_0\R^m}
    \mathcal{I}^{\ast} ( f )  \tmop{div} ( V ) \circ \mathcal{I}d\mathbb{P}  \\
    & =& \int_{C_0
 \R^m} \mathcal{I}^{\ast} ( f ) \tmop{div} \mathcal{I}^{\ast}(\tmmathbf{Y}_- ( V ( - ))
  ) d\mathbb{P}\\
  &=& - \int_{C_0
 \R^m} d (\mathcal{I}^{\ast} ( f ) )_{\omega} (\tmmathbf{Y}_{x_{\cdot}(\omega)}
  ( V (x_{\cdot}( \omega )) ) d\mathbb{P}( \omega )\\
  &=&- \int_{C_{x_0} M}
  \mathbb{E} \{ d (\mathcal{I}^{\ast} ( f ) )_{\omega} | x_\cdot ( \omega ) =
  \sigma \} \tmmathbf{Y}_{\sigma} ( V ( \sigma ) ) d \mu
  \end{eqnarray*}
   as required.
\end{proof}

\thanks{Acknowledgements: }
This research was partially supported by EPSRC research grant GR/H67263 and NSF  research 
grant DMS 0072387.  It
benefitted from our contacts with many colleagues, especially S. Aida, S. Fang,
Y. LeJan, Z.-M. Ma. and M. Rockner. K.D.E wishes to thank L.Tubaro and the
Mathematics Department at Trento for their hospitality with excellent
facilities during February and March 2004.

\end{document}